\DeclareMathAlphabet{\dutchcal}{U}{dutchcal}{m}{n}
\newcolumntype{L}[1]{>{\raggedright\let\newline\\\arraybackslash\hspace{0pt}}m{#1}}
\newcolumntype{C}[1]{>{\centering\let\newline\\\arraybackslash\hspace{0pt}}m{#1}}
\newcolumntype{R}[1]{>{\raggedleft\let\newline\\\arraybackslash\hspace{0pt}}m{#1}}
\newcommand{\suchthat}{\;\ifnum\currentgrouptype=16 \middle\fi|\;}
\newcommand{\bigmid}{\left.\vphantom{\Big\{} \suchthat \vphantom{\Big\}}\right.}
\newtheoremstyle{theoremstyle}
{10pt}      
{5pt}       
{\itshape}  
{}          
{\bfseries} 
{}         
{ }      
{}          
\newtheoremstyle{algorithmstyle}
{10pt}      
{5pt}       
{}  
{}          
{\bfseries} 
{}         
{ }      
{}          
\newtheoremstyle{examplestyle}
{10pt}      
{5pt}       
{}          
{}          
{\bfseries} 
{}         
{ }      
{}          
\newcommand{\subalign}[1]{%
  \vcenter{%
    \Let@ \restore@math@cr \default@tag
    \baselineskip\fontdimen10 \scriptfont\tw@
    \advance\baselineskip\fontdimen12 \scriptfont\tw@
    \lineskip\thr@@\fontdimen8 \scriptfont\thr@@
    \lineskiplimit\lineskip
    \ialign{\hfil$\m@th\scriptstyle##$&$\m@th\scriptstyle{}##$\hfil\crcr
      #1\crcr
    }%
  }%
}
\theoremstyle{theoremstyle}
\newtheorem{theorem}{Theorem}[section]
\newtheorem{lemma}[theorem]{Lemma}
\newtheorem{proposition}[theorem]{Proposition}
\theoremstyle{examplestyle}
\newtheorem{example}[theorem]{Example}
\newtheorem{definition}[theorem]{Definition}
\newtheorem{remark}[theorem]{Remark}
\newtheorem{convention}[theorem]{Convention}
\theoremstyle{algorithmstyle}
\newtheorem{algorithm}[theorem]{Algorithm}
\newcommand{\CC}{\mathbb{C}}
\newcommand{\RR}{\mathbb{R}}
\newcommand{\QQ}{\mathbb{Q}}
\newcommand{\ZZ}{\mathbb{Z}}
\DeclareMathOperator{\conv}{conv}
\DeclareMathOperator{\initial}{in}
\DeclareMathOperator{\Trop}{Trop}
\newcommand{\Lazypic}[2]{\begin{minipage}{#1} \vspace{0.1cm} \centering {#2}\vspace{0.1cm}\end{minipage}}
\DeclareMathOperator{\val}{val}
\begin{document}

\title{Zero-dimensional tropicalizations in \textsc{Oscar}}
\author{Arman Marti-Shahandeh, Yue Ren, Victoria Schleis}
\date{\today}
\begin{abstract}
  We present algorithms for computing zero-dimensional tropical varieties as implemented in \texttt{OscarZerodimensionalTropicalization.jl}.
  The algorithms include a workaround for a common practical issue arising when working with polynomials over inexact fields in existing software systems.
\end{abstract}
\maketitle

\section{Introduction}
Tropical varieties are a combinatorial shadow of algebraic varieties. They play a central role in tropical geometry, and, more broadly, in combinatorial algebraic geometry.  To understand and apply tropical geometry, efficient algorithms for the computation of tropical varieties are essential.  For example, showing that the tropicalization equals $\{0\}$ proves the finiteness of certain central configurations in celestial mechanics \cite{HamptonMoeckel2006,HamptonJensen2011}, and computing tropicalizations enables better ways to solve certain polynomial systems \cite{HelminckHenrikssonRen2024}.

Fundamentally, given an ideal $I\subseteq K[x_1,\dots,x_n]$ and a (possibly trivial) valuation $\val\colon K^\ast \rightarrow \RR$, the computational task is to find a set of polyhedra covering the \emph{tropical variety} $\Trop(I)\subseteq\RR^n$.  This is usually done via a traversal over $\Trop(I)$ \cite{BJSST2007,MarkwigRen2020}, and most non-Gr\"obner computations in that traversal can be boiled down to computations of zero-dimensional tropical varieties over function fields \cite{HofmannRen2018}. More specifically, we are interested in determining the tropical variety where $K$ is the algebraic closure of a rational function field $\mathfrak K(t)$ with its $t$-adic valuation, $I$ is zero-dimensional and $\Trop(I)$ consists of finitely many points.

In this paper, we report on a new implementation of the algorithm for computing zero-dimensional tropical varieties in \textsc{Oscar} \cite{Oscar}. Our algorithm works via triangular decomposition and root approximation as in \cite{HofmannRen2018}, but has several technical 
improvements over existing implementations.  

The main challenge we address is \textsc{Oscar}'s handling of polynomials over inexact fields, which is similar to that of \textsc{Macaulay2} and \textsc{Sagemath}, and different to that of \textsc{magma}.  Simply speaking, \textsc{Macaulay2}, \textsc{Oscar}, and \textsc{Sagemath} work with sparse polynomials, meaning that numerically zero terms are deleted, see \cref{sec:technicalProblems}.  This may destroy important precision information. We  solve this issue by introducing additional variables $u_1,\dots,u_n$ to track the error terms in \cref{sec:approximateRootsAndRootApproximations}.  
 
  Unlike \cite{HofmannRen2018}, we do not outsource the root approximation to \textsc{magma} \cite{magma}, which gives us much finer control on the precision required for computing $\Trop(I)$.  And unlike  a similar algorithm in \cite{JensenMarkwigMarkwig2008}, we do not track field extensions manually and instead make generous use of the \texttt{algebraic\_closure} functionality in \textsc{Oscar}, which improves the readability of our code significantly.

\subsection*{Funding}  Y.R. and V.S. are funded by the UKRI FLF \emph{Computational Tropical Geometry and its Applications} (MR/Y003888/1). V.S. was a member of the Institute for Advanced Study, funded by the Charles Simonyi Endowment.

\section{Background}
In this section, we fix our notation and review basics of valued fields, tropical geometry, and triangular decompositions of Gr\"obner bases. 

\begin{convention}
  \label{con:coefficientField}
  In this paper, let $\mathfrak K = \mathfrak K^{\mathrm{al}}$ be an algebraically closed field.  Let $\mathfrak K(\!(t)\!)$ denote its field of Laurent series, and let $K\coloneqq \mathfrak K(\!(t)\!)^{\mathrm{al}}$ be its algebraic closure.  If $\mathfrak K$ has characteristic $0$, then $K=\mathfrak K\{\!\{t\}\!\}$ is the field of Puiseux series \cite[Theorem 2.1.5]{MaclaganSturmfels}.  If $\mathfrak K$ has positive characteristic, $K$ is a subfield of the field of Hahn series \cite[Section 2]{Hahn1907} containing the field of Puiseux series:
  \begin{equation*}
    \mathfrak K\{\!\{t\}\!\}\subseteq K\subseteq \mathfrak K\llbracket t^\QQ\rrbracket \coloneqq \Big\{ \sum_{w\in S} c_w\cdot t^w \bigmid S\subseteq \QQ \text{ well ordered and } c_w\in\mathfrak K\Big\}.
  \end{equation*}
  Recall that $S\subseteq\QQ$ being well-ordered means any subset of $S$ has a minimum. This allows us to view elements of $K$ as generalized power series over $\mathfrak K$.

  The field $K$ has a natural valuation under which $\mathfrak K$ becomes its residue field:
  \begin{equation*}
    \val\colon K^\ast\coloneqq K\setminus\{0\}\longrightarrow\RR,\quad \sum_{w\in S} c_w\cdot t^w\longmapsto \min\{w\in S\mid c_w\neq 0\}.
  \end{equation*}
  When we want to focus on the lowest coefficients of $z\in K$, we will often write it as
  \begin{equation}
    \label{eq:rootApproximationForm}
    z=c_0\cdot t^{w_0}+c_1\cdot t^{w_1}+\dots+c_{r-1}\cdot t^{w_{r-1}}+u\cdot t^{w_{r}}.
  \end{equation}
  where $\val(z)=w_0<\dots <w_r$ and $c_0,\dots,c_{r-1}\in \mathfrak K^\ast$ and $u\in K^\ast$ with $\val(u)=0$.

  We further fix a multivariate Laurent polynomial ring $K[x^\pm]\coloneqq K[x_1^\pm,\dots,x_n^\pm]$ and its polynomial subring $K[x]\coloneqq K[x_1,\dots,x_n]$.
\end{convention}

\subsection{Tropical Varieties}\label{sec:tropicalVarieties}
In this section, we recall the definition of tropical varieties.  As we will rely on techniques adjacent to classical Gr\"obner basis theory, see \cref{sec:triangularDecomposition}, we will work with polynomial ideals rather than Laurent polynomial ideals as in \cite{MaclaganSturmfels}.  For the sake of simplicity, we use one of the equivalent definitions in the Fundamental Theorem of Tropical Geometry \cite[Theorem 3.2.3]{MaclaganSturmfels} 

\begin{definition}
  \label{def:tropicalVariety}
  The \emph{tropical variety} of an ideal $I\subseteq K[x^\pm]$ is
  \begin{equation*}
    \Trop(I)\coloneqq \mathrm{cl}\Big(\val(V(I)\cap (K^\ast)^n)\Big)\subseteq\RR^n,
  \end{equation*}
  where $V(I)\subseteq (K^\ast)^n$ is the very affine variety of $I$, $\val(\cdot)$ denotes the coordinatewise valuation, and $\mathrm{cl}(\cdot)$ denotes the closure in the Euclidean topology.

  For principal ideals $\langle f\rangle\subseteq K[x^\pm]$ we abbreviate $\Trop(f)\coloneqq\Trop(\langle f\rangle)$. 
\end{definition}

Every ideal of Laurent polynomials $I \subseteq K[x^\pm]$ induces an ideal of polynomials $I\cap K[x]$. This allows us to interpret $I$ as an ideal of polynomials over $K[x]$, which we will do in the remainder  of this paper.

When $I$ is zero-dimensional, $\Trop(I)$ consists of a finitely many points in $\RR^n$. Tropicalizations of univariate polynomials are zero-dimensional and can be read off their Newton polygons (also referred to as Newton diagrams in \cite{MaclaganSturmfels} and as extended Newton polyhedra in \cite{Joswig}):

\begin{definition}
  \label{def:newtonPolygon}
  Let $f' \coloneqq \sum_{j\in S}a_j\cdot x_i^j\in K[x_i]$ be a univariate polynomial with finite support $S\subseteq\ZZ_{\geq 0}$. 
  Its \emph{Newton polygon} is
  \begin{equation*}
    \Delta(f')=\conv\Big(\big\{(j,\val(a_j))\mid j\in S\big\}\Big) + \RR_{\geq 0}\cdot (0,1) \subseteq\RR^2.
  \end{equation*}
  A \emph{lower edge} of $\Delta(f')$ is an edge connecting two vertices $(j_1,\val(a_j))$ and $(j_2,\val(a_{j_2}))$ for distinct $j_1,j_2\in S$, say $j_1<j_2$, and we refer to $(\val(a_{j_2})-\val(a_{j_1}))/(j_2-j_1)$ as a \emph{slope} of $\Delta(f')$.
\end{definition}

The slopes of edges of the Newton polygon determine the tropicalization.

\begin{lemma}
  \label{lem:newtonPolygon}
  Let $f'\in K[x_i]$ be a univariate polynomial in $x_i$.  Then
   \begin{equation*}
    \label{eq:newtonPolygon}
    \Trop(f')=\{-s\in\RR\mid s\text{ slope of } \Delta(f')\}.
  \end{equation*}
\end{lemma}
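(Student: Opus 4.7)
The plan is to exploit that $K$ is algebraically closed to factor $f'$ completely, thereby reducing $\Trop(f')$ to a set of valuations of roots, and then match those valuations with the slopes of $\Delta(f')$ via the ultrametric property of $\val$.

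Since $K = K^{\mathrm{al}}$, the polynomial $f'$ splits as $f' = a \prod_{k=1}^d (x_i - \alpha_k)$ with $a,\alpha_k \in K$. Then $V(\langle f' \rangle) \cap K^\ast$ is the finite set of nonzero roots; being finite, its image under $\val$ is already closed in $\RR$, so by \cref{def:tropicalVariety},
\begin{equation*}
  \Trop(f') = \big\{\val(\alpha_k) \,\big|\, 1 \leq k \leq d,\; \alpha_k \neq 0\big\}.
\end{equation*}
It therefore suffices to show that $v \in \RR$ is the valuation of some nonzero root of $f'$ if and only if $-v$ is a slope of $\Delta(f')$.

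For the forward implication, given $\alpha \in K^\ast$ with $f'(\alpha) = 0$ and $\val(\alpha) = v$, I apply the ultrametric inequality to the expansion $0 = \sum_{j\in S} a_j \alpha^j$: the minimum of the term valuations $\val(a_j) + jv$ must be attained at least twice, say by indices $j_1 < j_2$. Then $v = -(\val(a_{j_2}) - \val(a_{j_1}))/(j_2 - j_1)$, and the inequality $\val(a_j) + jv \geq \val(a_{j_1}) + j_1 v$, which holds for every $j \in S$, places every point $(j, \val(a_j))$ on or above the line of slope $-v$ through $(j_1, \val(a_{j_1}))$ and $(j_2, \val(a_{j_2}))$. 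This certifies $-v$ as the slope of an edge of $\Delta(f')$.

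The reverse implication — that every slope $-v$ of $\Delta(f')$ actually arises as the valuation of a root, and moreover is realized by a number of roots equal to the horizontal length of the corresponding edge — is the main obstacle. It is the classical Newton polygon theorem for algebraically closed valued fields, which I would either cite directly from \cite{MaclaganSturmfels} or prove by induction on the number of lower edges via Hensel's lemma applied to a suitable initial factorization. Combined with the forward implication, and using that the total horizontal length of the lower edges of $\Delta(f')$ equals $\deg(f')$ minus the multiplicity of $0$ as a root, this exhausts all nonzero roots and yields the claimed equality.
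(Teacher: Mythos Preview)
The paper does not supply a proof for this lemma at all; it is stated without proof or explicit citation and treated as a standard fact (the classical Newton polygon theorem), presumably covered by the background reference \cite{MaclaganSturmfels} that the paper cites throughout \cref{sec:tropicalVarieties}. So there is nothing to compare against on the paper's side beyond an implicit appeal to the literature.

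Your argument is correct as far as it goes. The reduction to valuations of nonzero roots via the algebraic closedness of $K$ and the finiteness of the image is clean, and the forward implication is a complete and standard ultrametric argument. For the reverse implication you offer two options: citing \cite{MaclaganSturmfels} directly, or an inductive Hensel argument. The first option is effectively what the paper does (implicitly), so if you take that route your write-up is just a more explicit version of the paper's treatment. The Hensel/induction alternative would make the proof genuinely self-contained, which the paper does not attempt; if you pursue it, note that the cleanest formulation factors $f'$ along the first break point of the Newton polygon rather than edge-by-edge, but either works. The final counting remark about horizontal edge lengths is correct but unnecessary for the set equality you are proving---it would only be needed if you also wanted multiplicities, which the lemma does not assert.
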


In the best case, \cref{lem:newtonPolygon} alone suffices to compute tropicalizations of zero-dimensional ideals, as seen in \cref{fig:seriesOfNewtonPolygons}.

\begin{example}[{\cite[Example 2.14]{HofmannRen2018}}]
  \label{ex:tropicalVariety}
  Let $I=\langle f_1, f_2, f_3\rangle \subseteq \CC\{\!\{t\}\!\}[x_1,x_2,x_3]$ be generated by
  \begin{align*}
    f_1 =  tx_1^2+x_1+1, \qquad f_2 = tx_1x_2^2+x_1x_2+1, \qquad f_3 = x_1x_2x_3+1.
  \end{align*}
  It is straightforward to see that $I$ is zero-dimensional and how one can compute $\Trop(I)$ via back-substitution:

  According to $\Delta(f_1)$ we have $\Trop(f_1)=\{0,-1\}$.  Letting $z_1\in V(f_1)$ be the root with $\val(z_1)=0$ and $f_2(z_1,x_2)\in K[x_2]$ be the substituted polynomial, we see on $\Delta(f_2(z_1,x_2))$ that $\Trop(f_2(z_1,x_2))=\{0,-1\}$.  Exhausting this process yields the tree of Newton polygons in \cref{fig:seriesOfNewtonPolygons} and
  \[ \Trop(I) = \{ (0, 0, 0), (0, -1, 1), (-1, 1, 0), (-1, -1, 2)\}. \]
\end{example}
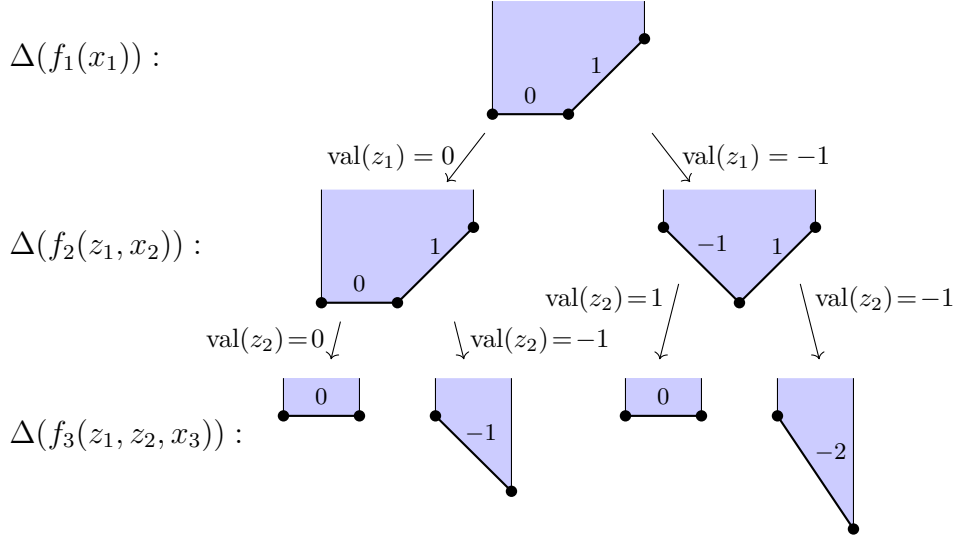
\begin{figure}[t]
  \flushleft
  \begin{tikzpicture}
    \node[anchor=west] at (-7.5,0.75) {$\Delta(f_1(x_1)):$};
    \fill[blue!20] (-1,1.5) -- (-1,0) -- (0,0) -- (1,1) -- (1,1.5) -- cycle;
    \draw (-1,1.5) -- (-1,0)
    (1,1) -- (1,1.5);
    \draw[thick,black]
    (-1,0) -- node[above,black,font=\scriptsize] {$0$} (0,0);
    \draw[thick,black]
    (0,0) -- node[anchor=south east,xshift=0.1cm,yshift=-0.1cm,black,font=\scriptsize] {$1$} (1,1);
    \fill (-1,0) circle (0.075cm);
    \fill (0,0) circle (0.075cm);
    \fill (1,1) circle (0.075cm);

    \draw[->,black] (-1.1,-0.25) -- node[left,black,font=\footnotesize] {$\val(z_1)=0$} ($(-1.1,-0.25)+(-0.5,-0.65)$);
    \draw[->,black] (1.1,-0.25) -- node[right,black,font=\footnotesize] {$\val(z_1)=-1$} ($(1.1,-0.25)+(0.5,-0.65)$);

    \node[anchor=west] at (-7.5,-1.75) {$\Delta(f_2(z_1,x_2)):$};
    \node (o11) at (-2.25,-2.5) {};
    \fill[blue!20] ($(-1,1.5)+(o11)$) -- ($(-1,0)+(o11)$) -- ($(0,0)+(o11)$) -- ($(1,1)+(o11)$) -- ($(1,1.5)+(o11)$) -- cycle;
    \draw ($(-1,1.5)+(o11)$) -- ($(-1,0)+(o11)$)
    ($(1,1)+(o11)$) -- ($(1,1.5)+(o11)$);
    \draw[thick,black]
    ($(-1,0)+(o11)$) -- node[above,black,font=\scriptsize] {$0$} ($(0,0)+(o11)$);
    \draw[thick,black]
    ($(0,0)+(o11)$) -- node[above,black,font=\scriptsize] {$1$} ($(1,1)+(o11)$);
    \fill ($(-1,0)+(o11)$) circle (0.075cm);
    \fill ($(0,0)+(o11)$) circle (0.075cm);
    \fill ($(1,1)+(o11)$) circle (0.075cm);

    \draw[->,black] ($(-0.75,-0.25)+(o11)$) -- node[left,black,font=\footnotesize] {$\val(z_2)\!=\!0$} ++(-0.125,-0.5);
    \draw[->,black] ($(0.75,-0.25)+(o11)$) -- node[right,black,font=\footnotesize] {$\val(z_2)\!=\!-1$} ++(0.125,-0.5);

    \node (o12) at (2.25,-1.5) {};
    \fill[blue!20] ($(-1,0.5)+(o12)$) -- ($(-1,0)+(o12)$) -- ($(0,-1) + (o12)$) -- ($(1,0)+(o12)$) -- ($(1,0.5)+(o12)$) -- cycle;
    \draw ($(-1,0.5)+(o12)$) -- ($(-1,0)+(o12)$)
    ($(1,0)+(o12)$) -- ($(1,0.5)+(o12)$);
    \draw[thick,black]
    ($(-1,0)+(o12)$) -- node[above,black,font=\scriptsize,xshift=4pt] {$-1$} ($(0,-1)+(o12)$) -- node[above,black,font=\scriptsize] {$1$} ($(1,0)+(o12)$);
    \fill ($(-1,0)+(o12)$) circle (0.075cm);
    \fill ($(0,-1)+(o12)$) circle (0.075cm);
    \fill ($(1,0)+(o12)$) circle (0.075cm);

    \draw[->,black] ($(-0.8,-0.75)+(o12)$) --  node[pos=0.2,left,black,font=\footnotesize] {$\val(z_2)\!=\!1$} ++(-0.25,-1);
    \draw[->,black] ($(0.8,-0.75)+(o12)$) --  node[pos=0.2,right,black,font=\footnotesize] {$\val(z_2)\!=\!-1$} ++(0.25,-1);

    \node[anchor=west] at (-7.5,-4.25) {$\Delta(f_3(z_1,z_2,x_3)):$};
    \node (o21) at ($(o11)+(-1.5,-1.5)$) {};
    \fill[blue!20] ($(0,0.5)+(o21)$) -- ($(0,0)+(o21)$) -- ($(1,0)+(o21)$) -- ($(1,0.5)+(o21)$) -- cycle;
    \draw ($(0,0.5)+(o21)$) -- ($(0,0)+(o21)$)
    ($(1,0)+(o21)$) -- ($(1,0.5)+(o21)$);
    \draw[thick,black]
    ($(0,0)+(o21)$) -- node[above,black,font=\scriptsize] {$0$} ($(1,0)+(o21)$);
    \fill ($(0,0)+(o21)$) circle (0.075cm);
    \fill ($(1,0)+(o21)$) circle (0.075cm);

    \node (o22) at ($(o11)+(0.5,-1.5)$) {};
    \fill[blue!20] ($(0,0.5)+(o22)$) -- ($(0,0)+(o22)$) -- ($(1,-1)+(o22)$) -- ($(1,0.5)+(o22)$) -- cycle;
    \draw ($(0,0.5)+(o22)$) -- ($(0,0)+(o22)$)
    ($(1,-1)+(o22)$) -- ($(1,0.5)+(o22)$);
    \draw[thick,black]
    ($(0,0)+(o22)$) -- node[above,black,font=\scriptsize,xshift=0.1cm] {$-1$} ($(1,-1)+(o22)$);
    \fill ($(0,0)+(o22)$) circle (0.075cm);
    \fill ($(1,-1)+(o22)$) circle (0.075cm);

    \node (o23) at ($(o12)+(-1.5,-2.5)$) {};
    \fill[blue!20] ($(0,0.5)+(o23)$) -- ($(0,0)+(o23)$) -- ($(1,0)+(o23)$) -- ($(1,0.5)+(o23)$) -- cycle;
    \draw ($(0,0.5)+(o23)$) -- ($(0,0)+(o23)$)
    ($(1,0)+(o23)$) -- ($(1,0.5)+(o23)$);
    \draw[thick,black]
    ($(0,0)+(o23)$) -- node[above,black,font=\scriptsize] {$0$} ($(1,0)+(o23)$);
    \fill ($(0,0)+(o23)$) circle (0.075cm);
    \fill ($(1,0)+(o23)$) circle (0.075cm);

    \node (o24) at ($(o12)+(0.5,-2.5)$) {};
    \fill[blue!20] ($(0,0.5)+(o24)$) -- ($(0,0)+(o24)$) -- ($(1,-1.5)+(o24)$) -- ($(1,0.5)+(o24)$) -- cycle;
    \draw ($(0,0.5)+(o24)$) -- ($(0,0)+(o24)$)
    ($(1,-1.5)+(o24)$) -- ($(1,0.5)+(o24)$);
    \draw[thick,black]
    ($(0,0)+(o24)$) -- node[above,black,xshift=0.2cm,font=\scriptsize] {$-2$} ($(1,-1.5)+(o24)$);
    \fill ($(0,0)+(o24)$) circle (0.075cm);
    \fill ($(1,-1.5)+(o24)$) circle (0.075cm);
  \end{tikzpicture}\vspace{-3mm}
  \caption{A possible series of Newton polygons of $F$. }
  \label{fig:seriesOfNewtonPolygons}
\end{figure}

While it sufficed to know the valuation of the roots in \cref{ex:tropicalVariety}, one often needs to approximate as in \cref{eq:rootApproximationForm}.  The lowest coefficients of the roots are determined by initial forms.

\begin{definition}
  \label{def:initialForm}
  For $a\in K$ with $\val(a)\geq 0$ we write $\overline{a}\in\mathfrak K$ for the residue of $a$ in $\mathfrak K$ and for any $a\in K$ we write $\initial(a)\coloneqq \overline{t^{-\val(a)}\cdot a}\in \mathfrak K$.  Note that $\initial(a)$ is the lowest non-zero coefficient of $a$ regarded as a power series over $\mathfrak K$.

The \emph{initial form} of a polynomial $f\coloneqq \sum_{\alpha\in S}a_\alpha\cdot x^\alpha \in K[x]$ with finite support $S\subseteq\ZZ_{\geq 0}^n$ and coefficients $a_\alpha\in K^\ast$ with respect to a weight vector $w\in\RR^n$ is
  \begin{equation*}
    \initial_w(f)\coloneqq \sum_{\substack{w\cdot \alpha+\val(a_\alpha)\\\text{minimal}}} \initial(a_\alpha)\cdot x^\alpha\in \mathfrak K[x].
  \end{equation*}
\end{definition}

\begin{lemma}
  \label{lem:initialForm}
  Let $f'\in K[x_i]$ be a univariate polynomial in $x_i$.  Then for $w\in\Trop(f')$
  \begin{equation*}
    \label{eq:initialForm}
    V(\initial_w(f'))=\{ \initial(z)\in\mathfrak K\mid z\in V(f') \text{ and }\val(z)=w \}.
  \end{equation*}
\end{lemma}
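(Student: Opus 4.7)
The plan is to reduce the statement to Hensel's lemma by rescaling $f'$ so that the resulting polynomial lies in the valuation ring of $K$ and reduces exactly to $\initial_w(f')$. Writing $f' = \sum_{j \in S} a_j x_i^j$, I would set $m \coloneqq \min_{j \in S}(jw + \val(a_j))$ and define
\[
  g(y) \coloneqq t^{-m} \cdot f'(t^w y) = \sum_{j \in S} a_j t^{jw-m} \cdot y^j \in K[y].
\]
Every coefficient $a_j t^{jw-m}$ has valuation $\val(a_j) + jw - m \geq 0$, with equality precisely for those $j$ indexing the initial form, so $g$ lies in the valuation ring $\mathcal{O}_K[y]$ and its reduction modulo the maximal ideal of $\mathcal{O}_K$ equals $\initial_w(f')$. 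Moreover, $y \mapsto t^w y$ gives a bijection between $V(g) \cap K^\ast$ and $V(f') \cap K^\ast$ that restricts to a bijection between roots of valuation $0$ and roots of valuation $w$, with $\initial(t^w y) = \overline{y}$ in the latter case.

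For the inclusion ``$\supseteq$'', take $z \in V(f')$ with $\val(z) = w$ and set $y \coloneqq t^{-w} z \in \mathcal{O}_K^\ast$. Reducing the identity $g(y) = 0$ modulo the maximal ideal of $\mathcal{O}_K$ yields $\initial_w(f')(\initial(z)) = \overline{g(y)} = 0$, so $\initial(z) \in V(\initial_w(f'))$.

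For the inclusion ``$\subseteq$'', let $\zeta \in V(\initial_w(f')) \subseteq \mathfrak K^\ast$ with multiplicity $\mu$, and factor $\initial_w(f') = (y-\zeta)^\mu \cdot q(y)$ in $\mathfrak K[y]$ with $q(\zeta) \neq 0$, so that $(y-\zeta)^\mu$ and $q$ are coprime. Since $K$ is algebraically closed, $\mathcal{O}_K$ is Henselian, and Hensel's lemma lifts this factorization of $\overline{g}$ to a factorization $g = g_1 \cdot g_2$ in $\mathcal{O}_K[y]$ with $g_1$ monic of degree $\mu$ reducing to $(y-\zeta)^\mu$. Any root $y_0 \in K$ of $g_1$ is integral over $\mathcal{O}_K$, hence lies in $\mathcal{O}_K$, and its reduction must be a root of $(y-\zeta)^\mu$, forcing $\overline{y_0} = \zeta$. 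Then $z \coloneqq t^w y_0$ is a root of $f'$ satisfying $\val(z) = w$ and $\initial(z) = \zeta$.

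The main obstacle is justifying that $\mathcal{O}_K$ is Henselian, particularly when $\mathfrak K$ has positive characteristic and $K$ is a proper subfield of the Hahn series. This follows from the fact that $K$ is the algebraic closure of the complete (hence Henselian) field $\mathfrak K(\!(t)\!)$, and algebraic extensions of Henselian valued fields remain Henselian. The remaining steps—verifying the valuations of the coefficients of $g$, identifying its reduction with $\initial_w(f')$, and tracking the bijection of roots under the rescaling—are routine bookkeeping.
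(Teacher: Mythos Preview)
Your proof is correct. The paper does not actually give an argument: its entire proof reads ``Contained in the proof of \cite[Proposition~3.1.5]{MaclaganSturmfels}.'' Your rescaling $g(y)=t^{-m}f'(t^{w}y)$ and the identification $\overline g=\initial_w(f')$ are exactly the standard reduction, and both inclusions are handled correctly; your justification that $\mathcal O_K$ is Henselian (as the valuation ring of an algebraic extension of the complete field $\mathfrak K(\!(t)\!)$) is also fine.

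The one point worth flagging is methodological rather than a gap. For the inclusion ``$\subseteq$'' you invoke Hensel's lemma to lift a coprime factorisation of $\overline g$. This works, but since $K$ is already algebraically closed it is heavier than necessary: one can instead factor $g(y)=c\prod_j(y-y_j)$ linearly over $K$, sort the roots according to whether $\val(y_j)$ is negative, zero, or positive, and check directly that the reduction of this product is, up to a unit of $\mathfrak K$, the product of a copy of $y$ for each $y_j$ of positive valuation and a factor $y-\overline{y_j}$ for each $y_j$ of valuation zero. Thus the nonzero roots of $\overline g=\initial_w(f')$ are precisely the residues $\overline{y_j}=\initial(z_j)$ for those $z_j\in V(f')$ with $\val(z_j)=w$. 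That direct computation is essentially what underlies the cited proposition in Maclagan--Sturmfels. Your Hensel route has the virtue of making the lifting step conceptually clean and of carrying over verbatim to Henselian valued fields that are not algebraically closed; the direct-factorisation route trades that generality for being entirely elementary in the present setting.
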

\begin{proof}
  Contained in the proof of \cite[Proposition 3.1.5]{MaclaganSturmfels}.
\end{proof}

\subsection{Triangular decompositions}\label{sec:triangularDecomposition}

\begin{definition}
  \label{eq:triangularSet}
  A set of polynomials $F\subseteq K[x_1,\dots,x_n]$ is a \emph{triangular set} if it is of the form $F=\{f_1, \dots, f_n\}$ with $f_i \in K[x_1,\dots,x_i]\setminus K[x_1,\dots,x_{i-1}]$, i.e., the $i$-th polynomial only contains the first $i$ variables.
\end{definition}

\begin{proposition}[{\cite[Corollary 4.7.4]{GreuelPfister2007}}]
  \label{prop:triangularDecomposition}
  Let $I\subseteq K[x]$ be a zero-dimensional ideal.  Then there are triangular sets $F_1,\dots,F_k\subseteq K[x]$ such that
  \begin{equation*}
    \sqrt{I} = \sqrt{\langle F_1\rangle}\cap\dots\cap\sqrt{\langle F_k\rangle},
  \end{equation*}
  where $\sqrt{\cdot}$ denotes the radical of the ideal.
\end{proposition}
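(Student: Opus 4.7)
The plan is to induct on the number of variables $n$, reducing the ambient dimension by factorising a generator of the lowest elimination ideal $I\cap K[x_1]$ and splitting $I$ along its factors. For the base case $n=1$, the ring $K[x_1]$ is a principal ideal domain, so $I=\langle g\rangle$ for some $g\in K[x_1]$, and $F_1\coloneqq\{g\}$ is already a triangular set.

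For the inductive step, I would first note that zero-dimensionality forces $I\cap K[x_1]\neq 0$ (e.g.\ via a lex Gr\"obner basis, or because $K[x]/I$ is finite-dimensional so the residues of $1,x_1,x_1^2,\ldots$ are linearly dependent). Let $g_1$ generate this elimination ideal, and factor its squarefree part over $K$ into pairwise coprime irreducibles $g_1^{\mathrm{sqf}}=h_1\cdots h_k$. The key identity
\[
\sqrt{I}\;=\;\bigcap_{i=1}^{k}\sqrt{I+\langle h_i\rangle}
\]
follows by combining $g_1^{\mathrm{sqf}}\in\sqrt{I}$ with pairwise comaximality of the $\langle h_i\rangle$: the inclusion $\subseteq$ is immediate, and the reverse reduces to the containment $\prod_i(I+\langle h_i\rangle)\subseteq I+\langle g_1^{\mathrm{sqf}}\rangle\subseteq\sqrt{I}$, which one sees by expanding the product term by term.

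It then remains to decompose each $I+\langle h_i\rangle$. Passing to the quotient
\[
K[x_1,\ldots,x_n]/\langle h_i\rangle \;\cong\; L_i[x_2,\ldots,x_n],\qquad L_i\coloneqq K[x_1]/\langle h_i\rangle,
\]
turns the image of $I+\langle h_i\rangle$ into a zero-dimensional ideal in $n-1$ variables over the field $L_i$. Applying the inductive hypothesis there yields triangular decompositions, and each triangular component $\{\bar f_2,\ldots,\bar f_n\}$ lifts (after choosing arbitrary lifts of its coefficients) to a triangular set $\{h_i,f_2,\ldots,f_n\}\subseteq K[x]$. The radical statement transports back under the ideal correspondence for the surjection $K[x]\twoheadrightarrow K[x]/\langle h_i\rangle$, completing the induction.

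The main obstacle is that the recursion naturally passes through the residue fields $L_i$, so the inductive hypothesis must be formulated over an arbitrary coefficient field rather than $K$ specifically. In our setting this is painless: by \cref{con:coefficientField} the field $K$ is algebraically closed, so every irreducible $h_i$ is linear and $L_i\cong K$, and the induction never leaves $K$. The same argument works whenever the coefficient field supports factorisation of univariate polynomials, which is the generality in which \cite{GreuelPfister2007} establishes the statement.
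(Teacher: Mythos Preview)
Your argument is correct. Note, however, that the paper does not supply its own proof of this proposition: it simply imports the statement from \cite[Corollary 4.7.4]{GreuelPfister2007} and, in \cref{rem:triangularDecomposition}, points to the standard route via a lexicographic Gr\"obner basis (possibly obtained by \texttt{FGLM}) from which the triangular sets are read off by Lazard's splitting procedure.

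Your proof takes a different, more structural path: rather than manipulating a lex Gr\"obner basis, you split along the irreducible factors of a generator of $I\cap K[x_1]$ and recurse in the quotient $K[x_1]/\langle h_i\rangle[x_2,\dots,x_n]$. This is clean and conceptually transparent, and your observation that $K=K^{\mathrm{al}}$ makes every $h_i$ linear (so the recursion never leaves $K$) is exactly what keeps the bookkeeping trivial here. The Gr\"obner-basis route, by contrast, is constructive in a way that matches the implementation concerns of the paper: one actually computes a lex basis and decomposes it, which is what \textsc{Oscar} does in practice. So your argument is a perfectly good existence proof, while the cited reference doubles as the algorithm the paper relies on.
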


\begin{remark}
  \label{rem:triangularDecomposition}
    A triangular decomposition of an ideal $I$ can be obtained by computing a
    Gr\"obner basis of $I$ with respect to the lexicographical order. Given a Gr\"obner basis $G$ of $I$ for another order, the \texttt{FGLM}-algorithm \cite{faugereGianniLazardMora} efficiently converts $G$ to a lexicographical Gr\"obner basis.
\end{remark}

\section{Technical limitations in existing mathematical software}\label{sec:technicalProblems}
In this section, we briefly discuss technical limitations in existing mathematical software, which necessitate the concepts in \cref{sec:approximateRootsAndRootApproximations}.  In essence, there are two ways to work with Puiseux series:
\begin{enumerate}
\item \label{enumitem:inexact} inexactly up to finite precision (relative or absolute).  Examples include \texttt{PuiseuxSeriesRing} in \textsc{Magma} \cite{magma}, \texttt{PuiseuxSeriesField} in \textsc{Oscar} \cite{Oscar,OscarBook}, and \texttt{PuiseuxSeriesRing} in \textsc{SageMath} \cite{sagemath}.
\item \label{enumitem:exact} exactly but only allowing finite Puiseux series.  Examples include \textsc{PuiseuxPolynomial.jl} \cite{PuiseuxPolynomials.jl}, a standalone package for working with Puiseux polynomials in \textsc{julia}, and \textsc{OscarPuiseuxPolynomial.jl} \cite{OscarPuiseuxPolynomials.jl}, a package for working with Puiseux polynomials in \textsc{Oscar}.
\end{enumerate}
And while \textsc{Macaulay2} \cite{M2} does not support Puiseux series, its fields \texttt{RR} and \texttt{CC} operate similar to Option \eqref{enumitem:inexact}.

Both options can be problematic for the task at hand in their own ways.

In Option \eqref{enumitem:inexact}, when computing with polynomials over inexact fields, coefficients may become numerically $0$ but with low precision.   For example, the constant coefficient of $g\coloneqq f-(1+O(t))$ for $f\coloneqq (1+O(t^9))\cdot x+(1+O(t^9))\cdot y+(1+O(t^9))$ is numerically zero but only with low precision. A computer algebra system can deal with this in two ways: either by continuing to track the constant term even though it is numerically zero and remembering its low precision, or by deleting the constant term.
\textsc{Magma} does the former, 
while \textsc{Oscar}, \textsc{Macaulay2} and \textsc{Sagemath} do the latter.  

The latter can be problematic:
For example, \cref{fig:inexactCoefficientRingProblems} shows $g$ having no constant term in \textsc{Oscar} and thus evaluating it at $(1,1)$, yielding an incorrect high precision.

In Option \eqref{enumitem:exact}, the problem is evident:  It is difficult to approximate the root of say $f=(1-t^2)\cdot x - 1\in \CC\{\!\{t\}\!\}[x]$, i.e., approximate $z=1+t^2+t^4+\dots\in V(f)$, if the software does not distinguish between $1$ as an approximation to $z$ up to $t^0$ and $1+0\cdot t$ as an approximation to $z$ up to $t^1$.

\begin{figure}[t]
  \centering
  \begin{jllisting}
julia> K,t = puiseux_series_field(QQ, 9, "t"); # setting relative precision to 9
julia> R,(x,y) = K["x","y"];
julia> f = x + y + 1 # constant term only given with relative precision 1
x + y + 1
julia> g = f - (1+O(t)) # constant term is deleted as coefficient numerically 0
x + y
julia> evaluate(g, [1,1]) # evaluation wrongfully claims relative precision 9
2 + O(t^9)
  \end{jllisting}\vspace{-4mm}
  \caption{\textsc{Oscar} removing monomials with coefficients that are numerically $0$.}
  \label{fig:inexactCoefficientRingProblems}
\end{figure}

To work around both problems, the next section introduces a framework to track the necessary precisions for our task by introducing $n$ new variables.

\section{Root approximations}\label{sec:approximateRootsAndRootApproximations}
In this section, we introduce a symbolic workaround to the technical problem outlined in \cref{sec:technicalProblems}.  We then discuss how to perform root approximations of univariate polynomials whilst using this workaround.  

\subsection{Approximate roots}\label{sec:approximateRoots}
To symbolically track numerical uncertainties  in our computation, we work over the following ring:

\begin{definition}
  \label{def:uncertaintyRing}
  The \emph{uncertainty ring} is the polynomial ring $K[u]\coloneqq K[u_1,\dots,u_n]$.  On $K[u]$ we define a valuation extending that of $K$:
  \begin{equation*}
    \val\colon K[u]^\ast\rightarrow \RR,\quad \sum_{\alpha\in\ZZ_{\geq0}^n}a_\alpha u^\alpha\mapsto \min\Big( \{\val(a_\alpha)\mid \alpha\in\ZZ_{\geq 0}^n \text{ with } a_\alpha\neq 0 \}\Big).
  \end{equation*}
 We use $\tilde f_i \in K[u][x_i]$ to denote univariate approximate polynomials. Further, any $\tilde f\in K[u][x]$ will always be expressed as $\tilde f \coloneqq \sum_{\alpha\in S}\tilde a_\alpha\cdot x^\alpha$ for some finite $S\subseteq\ZZ^n_{\geq 0}$ and non-zero $\tilde a_j\in K[u_1,\dots,u_{i-1}]^\ast$.
\end{definition}

Treating $K[u]$ as a valued ring, the definitions of initials, Newton polygons, and initial forms extend straightforwardly from \cref{con:coefficientField}, \cref{def:newtonPolygon} and \cref{def:initialForm} respectively.

The variables $u_1,\dots,u_n\in K[u]$ represent arbitrary elements of $K$ of valuation $0$.  We will use $u_1$ for the tails of the roots $z_1\in V(f_1)$, and the remaining $u_i$ for the tails of the roots $z_i\in V(f_i(z_1,\dots,z_{i-1},x_i))$. 

\begin{convention}
  \label{con:triangularSet}
  For the remainder of the section, let $I=\langle F\rangle\subseteq K[x]$ be an ideal generated by a triangular set $F=\{f_1,\dots,f_n\}$ with $f_i\in K[x_1,\dots,x_i]$.
\end{convention}

\begin{definition}
  \label{def:newtonPolygonWellDefined}
  Let $\tilde f\in K[u_1,\dots,u_{i-1}][x_i]$ be a univariate approximate polynomial.  We say the Newton polygon $\Delta(\tilde f)$ is \emph{well-defined} or \emph{unique}, if
  \begin{equation}
    \label{eq:newtonPolygonWellDefined}
    \Delta(\underbrace{\tilde f|_{u_\ell=z_\ell}}_{\in K[x_i]})=\Delta(\underbrace{\tilde f|_{u_\ell=z_\ell'}}_{\in K[x_i]})
  \end{equation}
  for all $z_1,\dots,z_{i-1},z_1',\dots,z_{i-1}'\in K^\ast$ with $\val(z_\ell)=\val(z_\ell')=0$.
  If $\Delta(\tilde f)$ is unique, we define $\Delta(\tilde f)$ to be the polygon in \cref{eq:newtonPolygonWellDefined} and $\Trop(\tilde f)$ to be the set of its negated slopes.
\end{definition}

\begin{lemma}
  \label{lem:newtonPolygonWellDefined}
  Let $\tilde f=\sum_{\alpha\in S}\tilde a_\alpha\cdot x^\alpha\in K[u_1,\dots,u_{i-1}][x_i]$ be a univariate approximate polynomial. Then $\Delta(\tilde f)$ is unique  if and only if
    $\tilde a_j\in K[u_1,\dots,u_{i-1}]$ is a monomial for all vertices $(j,\val(\tilde a_j))\in\Delta(\tilde f).
  $
\end{lemma}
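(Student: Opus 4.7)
The plan is to prove the two implications of the equivalence separately, with the harder reverse direction leveraging the algebraic closure of $\mathfrak K$.

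For ($\Leftarrow$), suppose every vertex coefficient $\tilde a_j$ is a monomial $c_j u^{\beta_j}$. Substituting $u_\ell = z_\ell$ for any $z_\ell \in K^\ast$ with $\val(z_\ell) = 0$ gives $\val(c_j z^{\beta_j}) = \val(c_j) = \val(\tilde a_j)$, so every vertex of $\Delta(\tilde f)$ stays at the same height. For non-vertex $j$, the substituted valuation $\val(\tilde a_j|_{u=z})$ can only increase (weakly), but since the point $(j, \val(\tilde a_j))$ already lies weakly above the lower boundary of $\Delta(\tilde f)$, any upward shift leaves the lower convex hull intact. Hence the same polygon is obtained for every valid substitution $z$, establishing uniqueness.

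For ($\Rightarrow$) I would argue the contrapositive: assuming some vertex coefficient $\tilde a_j$ is not a monomial, construct two substitutions $z, z'$ with different Newton polygons. A generic substitution $z'$ (under which no cancellation of minimal-valuation terms occurs) preserves all vertex heights and realises $\Delta(\tilde f)$. For the destructive substitution $z$, pass to the initial form $\initial(\tilde a_j) \in \mathfrak K[u_1, \ldots, u_{i-1}]$, which in the relevant case is a non-monomial polynomial parameterising the minimal-valuation terms of $\tilde a_j$ that can cancel. Using the algebraic closure of $\mathfrak K$, pick a zero $(\bar z_1, \ldots, \bar z_{i-1}) \in (\mathfrak K^\ast)^{i-1}$ of $\initial(\tilde a_j)$ and lift it to $(z_1, \ldots, z_{i-1}) \in (K^\ast)^{i-1}$ with $\val(z_\ell) = 0$ and $\initial(z_\ell) = \bar z_\ell$. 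Then $\val(\tilde a_j|_{u=z}) > \val(\tilde a_j)$, strictly moving the vertex $(j, \val(\tilde a_j))$ upward and altering the lower convex hull.

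The main obstacle is the torus-root claim: any polynomial $p \in \mathfrak K[u_1, \ldots, u_{i-1}]$ that is not a scalar multiple of a monomial has a zero in the torus $(\mathfrak K^\ast)^{i-1}$. The key observation is that the zero set $V(p) \subseteq \mathfrak K^{i-1}$ is contained in the union of coordinate hyperplanes $\bigcup_\ell \{u_\ell = 0\}$ only when $p$ is itself a scalar multiple of a monomial, which follows from unique factorisation in $\mathfrak K[u]$ together with the algebraic closure (and hence infiniteness) of $\mathfrak K$. Once a torus zero of $\initial(\tilde a_j)$ is produced, the lift to $K^\ast$ is automatic via the embedding of $\mathfrak K$ into $K$ as the residue field.
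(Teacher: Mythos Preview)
Your $(\Leftarrow)$ direction is fine and in fact slightly more careful than the paper's, which writes a termwise equality $\val(\tilde a_j|_{u_\ell=z_\ell})=\val(\tilde a_j)$ for \emph{all} $j\in S$ that need not hold at non-vertex indices; you correctly observe that those points can only move upward and hence cannot change the lower hull.

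For $(\Rightarrow)$ your contrapositive strategy is also the paper's, but the executions differ: you aim merely to push $\val(\tilde a_j|_{u=z})$ strictly above $\val(\tilde a_j)$ by killing $\initial(\tilde a_j)$, whereas the paper asserts one can find $z'$ with $\val(z_\ell')=0$ and $\tilde a_j|_{u=z'}=0$ outright. Your target is the more reasonable one---the paper's is too strong, as $\tilde a_j=u_1-t$ already shows (its only root has $\val=1$).

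That said, there is a genuine gap in your argument. You say $\initial(\tilde a_j)$ is ``in the relevant case'' a non-monomial, but this does not follow from $\tilde a_j$ being a non-monomial in $K[u_1,\dots,u_{i-1}]$. Take $\tilde a_j=1+tu_1$: it has two terms, yet $\initial(\tilde a_j)=1\in\mathfrak K$ is a constant with no torus zero, so your destructive substitution cannot be built. Worse, for every $z_1$ of valuation $0$ one has $\val(1+tz_1)=0$, so the vertex never moves---which means the lemma as literally stated is false, and the intended condition is almost certainly that $\initial(\tilde a_j)$ (rather than $\tilde a_j$ itself) be a monomial at each vertex. Under that corrected hypothesis your torus-zero argument is exactly what is needed, while the paper's version still overshoots.
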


\begin{proof}
  We start by showing ``$\Leftarrow$". Assume that $ \tilde a_j\in K[u_1,\dots,u_{i-1}]$ is a monomial for all vertices $(j,\val(\tilde a_j))\in\Delta(\tilde f)$. Then for all $z_1,\dots,z_{i-1},z_1',\dots,z_{i-1}'\in K^\ast$ with $\val(z_\ell)=\val(z_\ell')=0$ we have
  \begin{align*}
       \Delta(\tilde f|_{u_\ell=z_\ell}) &=  \conv\Big(\big\{(j,\val(\tilde a_j|_{u_\ell=z_\ell}))\mid j\in S\big\}\Big) + \RR_{\geq 0}\cdot (0,1) \\
        &=  \conv\Big(\big\{(j, \val(\tilde a_j) \mid j\in S\big\}\Big) + \RR_{\geq 0}\cdot (0,1) \\
        &=  \conv\Big(\big\{(j,\val(\tilde a_j|_{u_j=z_j'}))\mid j\in S\big\}\Big) + \RR_{\geq 0}\cdot (0,1)  = \Delta(\tilde f|_{u_j=z_j'}).
    \end{align*}
    This proves that $\Delta(\tilde f)$ is unique.

    We now show ``$\Rightarrow$". Assume there exists some vertex $(j,\val(\tilde a_j))\in\Delta(\tilde f)$ such that $\tilde a_j\in K[u_1,\dots,u_{i-1}]$ is not a monomial. Then there are $z_1,\dots,z_{i-1},z_1',\dots,z_{i-1}'\in K^\ast$ with $\val(z_\ell)=\val(z_\ell')=0$ such that $\tilde a|_{u_j=z_j}\neq 0$ with $\val(\tilde a|_{u_j=z_j})=\val(\tilde a_j)$ and $\tilde a|_{u_j=z_j'}=0$.  Consequently, $\Delta(\tilde f|_{u_j=z_j})$ contains vertex $(j,\val(\tilde a_j))$ and $\Delta(\tilde f|_{u_j=z_j'})$ does not.  This implies that $\Delta(\tilde f)$ is not unique.
\end{proof}

Next we introduce approximate roots and characterize them algebraically for adaptation in our implementation.

\begin{definition}
  \label{def:approximateRoots}
  Let $\tilde f_i\in K[u_1,\dots,u_{i-1}][x_i]$ be a univariate polynomial over the uncertainty ring with a unique Newton polygon.
  An \emph{approximate root} of $\tilde f_i$ is an element $\tilde z_i\in K[u_i]$ of the form
  \begin{equation*}
    \tilde z_i = c_1t^{w_1}+\dots+c_{r-1} t^{w_{r-1}} + u_i \cdot t^{w_r}
  \end{equation*}
  for some $r\geq 1$, $w_1<\dots<w_r$, $c_j\in \mathfrak K^\ast$ for $j<r$ such that for all $z_1,\dots,z_{i-1}\in K^\ast$ with $\val(z_j)=0$ there is an $z_i\in K^\ast$ with $\val(z_i)=0$ such that $\tilde f(\tilde z)|_{u_1=z_1,\dots,u_i=z_i}=0$.

  We say a set of approximate roots $\tilde V\subseteq K[u_i]$ is \emph{complete}, if for all $z_1,\dots,z_{i-1}\in K^\ast$ with $\val(z_\ell)=0$ and all $z\in V(\tilde f|_{u_1=z_1,\dots,u_{i-1}=z_{i-1}})\subseteq K$ there are $\tilde z_i\in\tilde V$ and $z_i\in K^\ast$ with $\val(z_i)=0$ such that $z=\tilde z_i|_{u_i=z_i}$.
\end{definition}

\begin{example}
  \label{ex:approximateRoots}
  Consider $\tilde f_2 = (x_2-t-t^2u_1)(x_2-1-t-t^2u_1)\in K[u_1][x_2]$. Two complete sets of approximate roots are $\widetilde V=\{u_2, tu_2\}$ and $\widetilde V'=\{1+t+t^2u_2, t+t^2u_2\}$.  In particular, this shows that complete sets of approximate roots are not unique. 
\end{example}

\begin{lemma}
  \label{lem:approximateRoots}
  Let $\tilde f_i\in K[u_1,\dots,u_{i-1}][x_i]$ have a unique Newton polygon.  Then
    $\tilde z_i \in K[u_1,\dots,u_i] \text{ is an approximate root of } \tilde f_i$ if and only if
    $$\initial(\tilde f_i(\tilde z_i))\in \mathfrak K[u_1,\dots,u_{i-1}][u_i]$$ is not a monomial in $u_i$.
\end{lemma}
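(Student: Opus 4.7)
The strategy is to relate the approximate-root condition for $\tilde z_i$ to a Newton polygon analysis of the specialization $g_z \coloneqq \tilde f_i(\tilde z_i)|_{u_1=z_1,\dots,u_{i-1}=z_{i-1}} \in K[u_i]$ at tuples $z=(z_1,\dots,z_{i-1})\in(K^\ast)^{i-1}$ with $\val(z_\ell)=0$. By \cref{lem:newtonPolygon}, such a $g_z$ admits a root of valuation $0$ if and only if $\Delta(g_z)$ has a slope-zero lower edge, equivalently iff $\initial_0(g_z)\in\mathfrak K[u_i]$ is not a monomial in $u_i$. Hence $\tilde z_i$ is an approximate root iff $\initial_0(g_z)$ is non-monomial in $u_i$ for every such $z$, and the task reduces to comparing this collective condition against the non-monomiality of $\initial(\tilde f_i(\tilde z_i))\in\mathfrak K[u_1,\dots,u_{i-1}][u_i]$.

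For ``$\Rightarrow$'' I argue contrapositively. If $\initial(\tilde f_i(\tilde z_i)) = q(u_1,\dots,u_{i-1})\cdot u_i^{k_0}$ is a monomial in $u_i$ with $q\neq 0$, then since $\mathfrak K$ is algebraically closed and hence infinite, I can pick $c\in(\mathfrak K^\ast)^{i-1}$ with $q(c)\neq 0$. For any $z$ with $\val(z_\ell)=0$ and $\initial(z_\ell)=c_\ell$, no cancellation occurs at the minimum valuation of $\tilde f_i(\tilde z_i)$ under the specialization, so $\initial_0(g_z)=q(c)\cdot u_i^{k_0}$ is a nonzero monomial. By \cref{lem:newtonPolygon}, $g_z$ has no root of valuation $0$, and $\tilde z_i$ is not an approximate root.

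For ``$\Leftarrow$'' I write $\tilde f_i(\tilde z_i)=\sum_j h_j\cdot u_i^j$ with $h_j\in K[u_1,\dots,u_{i-1}]$, and let $J\coloneqq\{j:\val(h_j)=\val(\tilde f_i(\tilde z_i))\}$; non-monomiality of $\initial(\tilde f_i(\tilde z_i))$ in $u_i$ gives $|J|\geq 2$. I aim to show that for every $z$ with $\val(z_\ell)=0$, at least two of the $\initial(h_j)$, $j\in J$, remain nonzero after specializing $u_\ell\mapsto\initial(z_\ell)\in\mathfrak K^\ast$, which will force $\initial_0(g_z)$ to remain non-monomial. Key to this is the form $\tilde z_i = P + u_i t^{w_r}$ with $P\in K$: the Taylor-style expansion $\tilde f_i(P+u_i t^{w_r})=\sum_j \tfrac{1}{j!}\tilde f_i^{(j)}(P)\cdot u_i^j t^{j w_r}$ identifies each $h_j$ with $\tfrac{1}{j!}\tilde f_i^{(j)}(P)\cdot t^{j w_r}$, and together with \cref{lem:newtonPolygonWellDefined} applied to $\tilde f_i$, the uniqueness hypothesis on $\Delta(\tilde f_i)$ will force the extreme-index coefficients $\initial(h_{\min J})$ and $\initial(h_{\max J})$ to be monomials in $\mathfrak K[u_1,\dots,u_{i-1}]$; these never vanish on $(\mathfrak K^\ast)^{i-1}$.

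The main obstacle is precisely this last combinatorial propagation of monomiality: extracting monomiality of the extreme-index coefficients on the minimum-valuation edge of $\Delta(\tilde f_i(\tilde z_i))$ (viewed as polygon in $u_i$) from the vertex monomiality of $\Delta(\tilde f_i)$ (as polygon in $x_i$), tracking which summands in the binomial expansion of $\tilde f_i(P+u_it^{w_r})$ dominate at the endpoints of the relevant edge. This is where the uniqueness hypothesis on $\Delta(\tilde f_i)$ enters in an essential way.
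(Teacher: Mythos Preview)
Your reduction of both directions to whether $\initial_0(g_z)$ is a non-monomial in $u_i$ is exactly the mechanism behind the paper's (very terse) argument, and your ``$\Rightarrow$'' is correct---indeed more careful than the paper's, which fixes arbitrary $z_\ell$ and silently assumes $\initial$ commutes with specialization. You rightly observe that a \emph{single} $z$ with $q(\initial(z))\neq 0$ suffices to witness failure of the approximate-root condition.

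For ``$\Leftarrow$'', however, the step you flag as the main obstacle genuinely fails. Uniqueness of $\Delta(\tilde f_i)$ only constrains $\initial(\tilde a_k)$ at vertices of the polygon in $x_i$; after the shift $x_i\mapsto P+x_i t^{w_r}$ the $\tilde a_k$ get mixed, and monomiality need not propagate to $\initial(h_{\min J})$. Concretely, take $\tilde f_2=x_2^2+(1+tu_1)x_2+t$: its Newton polygon has vertices $(0,1),(1,0),(2,0)$ with $\initial(\tilde a_j)\in\mathfrak K$ at each, so $\Delta(\tilde f_2)$ is unique. For $\tilde z_2=-1+tu_2$ one computes $\tilde f_2(\tilde z_2)=t(1-u_1-u_2)+t^2u_2(u_1+u_2)$, so $\initial(\tilde f_2(\tilde z_2))=(1-u_1)-u_2$ is not a monomial in $u_2$; yet $h_0=t(1-u_1)$ has $\initial(h_0)=1-u_1$, not a monomial in $u_1$. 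Specializing at $z_1=1$ gives $g_{z_1}=u_2(-t+t^2+t^2u_2)$, whose only roots are $0$ and $t^{-1}-1$, neither of valuation $0$. Hence $\tilde z_2$ is \emph{not} an approximate root, and the ``$\Leftarrow$'' direction---and with it the lemma as stated---actually fails. The paper's proof asserts precisely this step (``then there is some $\overline z_i\in\mathfrak K^\ast$ such that $\initial(\tilde f_i(\tilde z_i)|_{u_\ell=z_\ell})|_{u_i=\overline z_i}=0$'') without justification and shares the same gap. (Aside: your Taylor formula $h_j=\tfrac{1}{j!}\tilde f_i^{(j)}(P)\,t^{jw_r}$ breaks in positive characteristic; the binomial form $h_j=t^{jw_r}\sum_{k\ge j}\binom{k}{j}\tilde a_kP^{k-j}$ is the characteristic-free version.)
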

\begin{proof}
  In the following, let $z_1,\dots,z_{i-1}\in K^\ast$ with $\val(z_\ell)=0$.

  To see ``$\Rightarrow$'', assume that $\initial(\tilde f_i(\tilde z_i))$ is a monomial in $u_i$.  Then there is no $\overline z_i\in\mathfrak K^\ast$ such that $\initial(\tilde f_i(\tilde z_i)|_{u_1=z_1,\dots,u_{i-1}=z_{i-1}})|_{u_i=\overline{z}_i}=0$, and thus there is also no $z_i\in K^\ast$ with $\val(z_i)=0$ such that $\initial(\tilde f_i(\tilde z_i)|_{u_1=z_1,\dots,u_{i}=z_{i}})=0$.  This contradicts that $\tilde z_i$ is an approximate root of $\tilde f_i$.

 To show ``$\Leftarrow$'', assume that $\initial(\tilde f_i(\tilde z_i))\in \mathfrak K[u_1,\dots,u_{i-1}][u_i]$ is not a monomial in $u_i$. Then there is some $\overline z_i\in\mathfrak K^\ast$ such that $\initial(\tilde f_i(\tilde z_i)|_{u_1=z_1,\dots,u_{i-1}=z_{i-1}})|_{u_i=\overline{z}_i}=0$, and thus there is also no $z_i\in K^\ast$ with $\val(z_i)=0$ such that $\initial(\tilde f_i(\tilde z_i)|_{u_1=z_1,\dots,u_{i}=z_{i}})=0$.  This shows that $\tilde z_i$ is an approximate root of $\tilde f_i$.
\end{proof}

In \cref{ex:approximateRoots}, we have seen that approximate roots are not unique and can have varying precision.  This motivates the following:  

\begin{definition}
  \label{def:approximateRootsMaximalPrecision}
  Let $\tilde f_i\in K[u_1,\dots,u_{i-1}][x_i]$ be a univariate polynomial over the uncertainty ring with a unique Newton polygon, and let $\tilde z_i\in K[u_i]$ be an approximate root of $\tilde f_i$.
  We say $\tilde z_i$ has \emph{maximal precision}, if $\tilde f_i(\tilde z_i)|_{u_i=x_i}=\sum_{j=0}^d \tilde a_j\cdot x_i^j \in K[u_1,\dots,u_{i-1}][x_i]$ has a unique Newton polygon $\Delta(\tilde f_i(\tilde z_i)|_{u_i=x_i})$ and there is a point $(j,\val(\tilde a_j))\in \Delta(\tilde f_i(\tilde z_i)|_{u_i=x_i})$ on a lower edge such that $\initial(\tilde a_j)\in \mathfrak K[u_1,\dots,u_{i-1}]\setminus \mathfrak K$.
\end{definition}

\begin{example}
  \label{ex:approximateRootsMaximalPrecision}
  In \cref{ex:approximateRoots}, the approximate roots of $\tilde f_2$ in $\widetilde V$ are intuitively of lower precision than those in $\widetilde V'$.  This matches the fact that they are not of maximal precision by \cref{def:approximateRootsMaximalPrecision}:
  \begin{itemize}
  \item $\tilde f_2(u_2)|_{u_2=x_2}=\tilde f_2 = x_2^2 + (-2t^2u_1 - 2t - 1)x_2 + t^4u_1^2 + (2t^3 + t^2)u_1 + t^2 + t$,
  \item $\tilde f_2(tu_2)|_{u_2=x_2}=t^2x_2^2 + (-2t^3u_1 - 2t^2 - t)x_2 + t^4u_1^2 + (2t^3 + t^2)u_1 + t^2 + t$,
  \end{itemize}
  and while the Newton polygons $\Delta(\tilde f_i(\tilde z_i)|_{u_i=x_i})$ are unique, all coefficient initials $\initial(\tilde a_j)$ are in $\mathfrak K$, see \cref{fig:approximateRootsMaximalPrecision}.

  In contrast, the approximate roots in $\widetilde V'$ are of maximal precision:
  \begin{itemize}
  \item $\tilde f_2(1+t+t^2u_2)|_{u_2=x_2}=t^4x_2^2 + (-2t^4u_1 + t^2)x_2 + t^4u_1^2 - t^2u_1$,
  \item $\tilde f_2(t+t^2u_2)|_{u_2=x_2}=t^4x_2^2 + (-2t^4u_1 - t^2)x_2 + t^4u_1^2 + t^2u_1$,
  \end{itemize}
  and there are coefficient initials $\initial(\tilde a_j)$ in $\mathfrak K[u_1]\setminus\mathfrak K$, see \cref{fig:approximateRootsMaximalPrecision}.
\end{example}

\begin{figure}[t]
  \centering
  \begin{tikzpicture}
    \node (z11) at (0,0)
    {
      \begin{tikzpicture}
        \coordinate (v0) at (0,1);
        \coordinate (v1) at (1,0);
        \coordinate (v2) at (2,0);
        \coordinate (w0) at (0,1.5);
        \coordinate (w2) at (2,1.5);
        \fill[blue!20,draw=black,thick] (w0) -- (v0) -- (v1) -- (v2) -- (w2);
        \fill (v0) circle (2pt);
        \fill (v1) circle (2pt);
        \fill (v2) circle (2pt);
        \node[below,font=\footnotesize] at (v0) {$1$};
        \node[below,font=\footnotesize] at (v1) {$-1$};
        \node[below,font=\footnotesize] at (v2) {$1$};
        \node[font=\small] at (1,2) {$\tilde f_2(x_2)$};
      \end{tikzpicture}
    };
    \node (z12) at (3,0)
    {
      \begin{tikzpicture}
        \coordinate (v0) at (0,1);
        \coordinate (v1) at (1,0);
        \coordinate (v2) at (2,0);
        \coordinate (w0) at (0,1.5);
        \coordinate (w2) at (2,1.5);
        \fill[blue!20,draw=black,thick] (w0) -- (v0) -- (v1) -- (v2) -- (w2);
        \fill (v0) circle (2pt);
        \fill (v1) circle (2pt);
        \fill (v2) circle (2pt);
        \node[below,font=\footnotesize] at (v0) {$1$};
        \node[below,font=\footnotesize] at (v1) {$-1$};
        \node[below,font=\footnotesize] at (v2) {$1$};
        \node[font=\small] at (1,2) {$\tilde f_2(tx_2)$};
      \end{tikzpicture}
    };
    \node (z11Prime) at (6,0)
    {
      \begin{tikzpicture}
        \coordinate (v0) at (0,0);
        \coordinate (v1) at (1,0);
        \coordinate (v2) at (2,1);
        \coordinate (w0) at (0,1.5);
        \coordinate (w2) at (2,1.5);
        \fill[blue!20,draw=black,thick] (w0) -- (v0) -- (v1) -- (v2) -- (w2);
        \fill (v0) circle (2pt);
        \fill (v1) circle (2pt);
        \fill (v2) circle (2pt);
        \node[below,font=\footnotesize] at (v0) {$-u_1$};
        \node[below,font=\footnotesize] at (v1) {$1$};
        \node[below,font=\footnotesize] at (v2) {$1$};
        \node[font=\small] at (1,2) {$\tilde f_2(1+t+t^2x_2)$};
      \end{tikzpicture}
    };
    \node (z12Prime) at (9,0)
    {
      \begin{tikzpicture}
        \coordinate (v0) at (0,0);
        \coordinate (v1) at (1,0);
        \coordinate (v2) at (2,1);
        \coordinate (w0) at (0,1.5);
        \coordinate (w2) at (2,1.5);
        \fill[blue!20,draw=black,thick] (w0) -- (v0) -- (v1) -- (v2) -- (w2);
        \fill (v0) circle (2pt);
        \fill (v1) circle (2pt);
        \fill (v2) circle (2pt);
        \node[below,font=\footnotesize] at (v0) {$u_1$};
        \node[below,font=\footnotesize] at (v1) {$-1$};
        \node[below,font=\footnotesize] at (v2) {$1$};
        \node[font=\small] at (1,2) {$\tilde f_2(t+t^2x_2)$};
      \end{tikzpicture}
    };
  \end{tikzpicture}\vspace{-3mm}
  \caption{The $\Delta(\tilde f_i(\tilde z_i)|_{u_i=x_i})$ and $\initial(\tilde a_j)$ from \cref{ex:approximateRootsMaximalPrecision}.}
  \label{fig:approximateRootsMaximalPrecision}
\end{figure}

\subsection{Roots of univariate polynomials}\label{sec:rootApproximations}

In this section, we discuss how to approximate the roots over the uncertainty ring.
The algorithm is a straightforward adaptation of the Newton-Puiseux expansion.

\begin{algorithm}[\texttt{puiseux\_expansion}]\label{alg:localfieldexpansion}\
  \begin{algorithmic}[1]
    \REQUIRE{$(\tilde f,w,b)$, where
      \begin{enumerate}[leftmargin=*]
      \item $\tilde f\in K[u_1,\dots,u_{i-1}][x_i]$ with unique Newton polygon,
      \item $w\in \Trop(\tilde f)$ a desired tropical point,
      \item $p_{\text{rel}}\in \RR$ an upper relative precision limit.
      \end{enumerate}}
    \ENSURE{$\tilde V\subseteq K[u_1,\dots,u_i]$ all approximate roots of $\tilde f$ with valuation $w$ and either with maximal precision or with precision $p_{\text{rel}}$.}
    \STATE Set $h\coloneqq \operatorname{in}_{w}(\tilde f)\in \mathfrak K[u_1,\dots,u_{i-1}][x_i]$.
    \IF{$p_{\text{rel}}\leq 0$ \OR $h\notin \mathfrak K[x_i]$}
    \RETURN{$\{u_i\cdot t^w\}$}
    \ENDIF
    \STATE Compute $V(h)\cap\mathfrak K^\ast$.
    \STATE Initialize $\tilde V\coloneqq\emptyset$.
    \FOR{$c\in V(h)\cap \mathfrak K^\ast$}
    \STATE Construct $\tilde f_{ct^w}\coloneqq \tilde f(x_i+c\cdot t^w)\in K[u_1,\dots,u_{i-1}][x_i]$.
    \IF{$\Delta(\tilde f_{ct^w})$ not unique.}
    \RETURN{$\{u_i\cdot t^w\}$}
    \ENDIF
    \IF{$\tilde f_{ct^w}=0$}
    \STATE Update $\tilde V\coloneqq \tilde V\cup \{c\cdot t^w\}$.
    \ELSE
    \FOR{$w'\in \Trop(\tilde f_{ct^w})$ with $w'>w$}
    \STATE Update
    \begin{equation*}
      \tilde V\coloneqq \tilde V\cup \Big\{ c\cdot t^w + z  \bigmid z\in \texttt{puiseux\_expansion}(\tilde f_{ct^w}, w', p_{\text{rel}}-(w'-w)) \Big\}
    \end{equation*}
    \ENDFOR
    \ENDIF
    \ENDFOR
  \end{algorithmic}
\end{algorithm}

\begin{example}\label{ex:localfieldexpansion}
  Consider $w = 0$, $p_{\text{rel}}=2$, and the polynomial
  $$\tilde f=(x_1-1-t^2)(x_1-1-t-t^2)=x_1^2 + (-2t^2 - t - 2)x_1 + (t^4 + t^3 + 2t^2 + t + 1).$$
  Note that the Newton polygon of $\tilde f$ is indeed unique and $w\in \Trop(\tilde f)$.  Hence our objects satisfy the requirements for \cref{alg:localfieldexpansion}.

  The first iteration does the following:
  \begin{itemize}
  \item Set $h \coloneqq \initial_w(\tilde f) = x_1^2-2x_1+1$.
  \item Since $p_{\text{rel}}>0$ and $h\in \mathfrak K[x]$, we continue computing $V(h)\cap \mathfrak K^\ast = \{1\}$ (which gives us $1\cdot t^0$ as the first term of our roots).
  \item For $c=1$, we construct
    $$\tilde f_{1\cdot t^0} \coloneqq \tilde f(x+1\cdot t^0)=(x_1-t^2)(x_1-t-t^2)=x_1^2 + (-2t^2 - t)x_1 + (t^4 + t^3).$$
  \item As $\tilde f_{1\cdot t^0}\neq 0$ and $\Delta(\tilde f_{1\cdot t^0})$ is unique, we continue with $w'\in \Trop(\tilde f_{1\cdot t^0}) = \{1, 2\}$, noting that both points are greater than $w$.
  \end{itemize}
  From here, the recursion with updated precision $p_{\text{rel}}'=p_{\text{rel}}-1=1$ branches: 

  For $w'=2$, we get $h' \coloneqq \initial_2(\tilde f_{1\cdot t^0}) = -x+1$ and $V(h') = \{1\}$, i.e., $1\cdot t^2$ as the next term of our root. For $c'=1$, we construct
  $$ (\tilde f_{1\cdot t^0})_{1\cdot t^2}= \tilde f(x_1+1+1\cdot t^2+x)=0.$$
  This means we have computed our root exactly, giving us the root $1+t^2$.

  For $w'=1$, we get $h' \coloneqq \initial_1(f_{1\cdot t^0}) = x_1^2-x_1$ and $V(h)\cap\mathfrak K = \{1\}$, i.e., $1\cdot t$ as the next term of our root.  For $c'=1$, we construct
  $$ (\tilde f_{1\cdot t^0})_{1\cdot t^1}= \tilde f(x_1+1+1\cdot t^1)=(x_1+t-t^2)(x_1-t^2)=x_1^2 + (-2t^2 + t)x_1 + (t^4 - t^3). $$
  This yields the tropicalization $\Trop(\tilde f_{1+1\cdot t^1}) = \{1, 2\}$ of which only $w''=2>1=w'$.  Note however that the new updated precision is $p_{\text{rel}}''=p_{\text{rel}}'-1=0$, so the recursion terminates in the next step, giving us the approximate root $1+t+t^2u_1$.

  The collective result is the set $\{1+t^2, 1+t+t^2u_1\}$.
\end{example}

\section{Tropicalizing triangular sets}\label{sec:tropicalizingTriangularSets}
In this section, we discuss how to tropicalize varieties of ideals generated by a triangular sets as in \cite[Section 2]{HofmannRen2018}.  We improve on \cite[Algorithm 2.10]{HofmannRen2018} through finer control on the precision during the tropicalization.  

Our algorithm tropicalizes $I$ by solving the system $f_1=\dots=f_n=0$ through backsubstitution.  The precision to which a root $z_1\in V(f_1)$ is computed governs the maximal possible precision to which a root $z_2\in V(f_2(z_1,x_2))$ can be computed, which, in turn, governs the maximal possible precision to which a root $z_3\in V(f_3(z_1,z_2,x_3))$ can be computed, and so on.  Roots $z_2,z_3,\dots$ are computed up to the maximal possible precision, and the precision of $z_1$ is kept minimal.

\subsection{Root trees of triangular sets}\label{sec:rootTree}
As mentioned in the beginning of the section, our algorithm works using simple univariate root approximation and backsubstitution.  We track the global state of the computation in a so-called root tree.  Let us first recall some basic concepts from graph theory:

\begin{definition}
  \label{def:tree}
  A \emph{(rooted) tree} is a finite simple graph $\Gamma$ with no cycles and a distinguished vertex referred to as the \emph{root}, which we usually denote $v_0$.  The degree of a vertex is the number of edges connecting to it, and vertices of degree $1$ are referred to as \emph{leaves}.

  Using $V=V(\Gamma)$, $E=E(\Gamma)$, and $B=B(\Gamma)$ to denote the vertices, edges, and branches of $\Gamma$, respectively.  Here, a \emph{branch} is a sequence of vertices $(v_0,\dots,v_k)$ such that $v_0\in V$ is the root and $(v_{i-1},v_i)\in E$ are edges for $i=1,\dots,k$.  We refer to $k$ as the \emph{length} of the branch $(v_0,\dots,v_k)$, and the \emph{depth} of vertex $v_k$.  The latter is justified as for any $v_k\in V$ there is a unique branch $(v_0,\dots,v_k)\in B$.  The depth of $\Gamma$ is the maximal depth of its vertices, which is necessarily attained at its leaves.
\end{definition}

\begin{definition}
  \label{def:rootTree}
  A \emph{root tree} of the triangular set $F$ is a rooted tree $\Gamma$ of depth at most $n$ together with two maps
  \begin{align*}
    \tilde z&\colon V(\Gamma)\setminus\{v_0\}\rightarrow K[u],\qquad v\mapsto \tilde z(v)\eqqcolon \tilde z_v,\\
    p&\colon V(\Gamma)\setminus\{v_0\}\rightarrow \RR_{\geq 0},\qquad \hspace{1.5mm}v\mapsto p(v)\eqqcolon p_v,
  \end{align*}
  such that, for any branch $(v_0,v_1,\dots,v_k)\in B(\Gamma)$, the images $\tilde z_{v_i}\in K[u_1,\dots,u_i]$ are approximate roots of $f_i(\tilde z_{v_1},\dots,\tilde z_{v_{i-1}},x_i)\in K[u_1,\dots,u_{i-1}][x_i]$ for $i=1,\dots,k$.

  The purpose of $p_{v_1}$ is to record the relative precision of $\tilde z_{v_1}$.  The purpose of $p_{v_i}$ for $i>1$ is to record the relative precision of $\tilde z_{v_1}$ used in the computation of $\tilde z_{v_i}$.
\end{definition}

\begin{example}
  \label{ex:rootTree}
  The following are two extremal examples of root trees:
  \begin{enumerate}
  \item \label{enumitem:rootTreeStart}
    Let $\Gamma$ consist of a single root $v_0$, and $\tilde z, p$ be empty mappings.  Then $(\Gamma, \tilde z, p)$ is trivially a root tree.
  \item \label{enumitem:rootTreeEnd}
    Consider $F\subseteq \CC\{\!\{t\}\!\}[x_1,x_2,x_3]$ from \cref{ex:tropicalVariety}.  A root tree $\Gamma$ consists of
    \begin{itemize}
    \item the vertices $v_0,v_{1,1},v_{1,2},v_{2,1},\dots,v_{2,4},v_{3,1},\dots,v_{3,4}$,
    \item the edges $(v_0,v_{1,1})$, $(v_0,v_{1,2})$, $(v_{1,1},v_{2,1})$, $(v_{1,1},v_{2,2})$, $(v_{1,2},v_{2,3})$, $(v_{1,2},v_{2,4})$, and $(v_{2,r},v_{3,r})$ for $r=1,\dots, 4$,
    \item the approximate roots $z_{1,1}= u_1$, $z_{1,2}= t^{-1} u_1$, $z_{2,1}=u_2$, $z_{2,2}=t^{-1}u_2$, $z_{2,3}=tu_2$, $z_{2,4}=t^{-1}u_2$, $z_{3,1}=u_3$, $z_{3,2}=tu_3$, $z_{3,3}=u_3$, $z_{3,4}=t^2u_3$,
    \item and all relative precisions $p_v=0$.
    \end{itemize}
    Note that $\Gamma$ is roughly dual to the tree in \cref{fig:seriesOfNewtonPolygons}.
  \end{enumerate}
\end{example}

The main goal of our algorithm will be how to transform the tree in \eqref{enumitem:rootTreeStart} to a tree as described  in \eqref{enumitem:rootTreeEnd}.  This motivates the next definition.

\begin{definition}
  \label{def:rootTreeInitialAndComplete}
  We refer to the trivial root tree in \cref{ex:rootTree} \eqref{enumitem:rootTreeStart} as the \emph{starting root tree}.

  Furthermore, we say a root tree $(\Gamma,\tilde z)$ is \emph{(tropically) complete}, if for every tropical point $(w_1,\dots,w_n)\in \Trop(\langle F\rangle)$ there is a branch $(v_0,v_1,\dots,v_n)\in B(\Gamma)$ such that $(\val(\tilde z_{v_1}),\dots,\val(\tilde z_{v_n}))=(w_1,\dots,w_n)$. 
\end{definition}

\subsection{The tropicalization algorithm}\label{sec:algorithm}

In this section, we detail how the tropicalization algorithm in \texttt{OscarZerodimensionalTropicalization.jl} works.

\begin{definition}
  \label{def:extensionAndReinforcementPolynomials}
  Let $(\Gamma,\tilde z,p)$ be a root tree, let $v_k\in V(\Gamma)$ be a vertex and let $(v_0,\dots,v_k)\in B(\Gamma)$ the branch ending at $v_k$ of length $k<n$.

  The \emph{extension polynomial} past $v_k$ is the univariate polynomial
  \begin{equation*}
    f_{k+1}(\tilde z_{v_1},\dots,\tilde z_{v_k},x_{k+1})\in K[u_1,\dots,u_k][x_{k+1}].
  \end{equation*}

  The \emph{reinforcement polynomial} at $v_k$ is the univariate polynomial
  \begin{equation*}
    f_{k}(\tilde z_{v_1},\dots, \tilde z_{v_{k-1}}, x_k+\tilde z_{v_k})\in K[u_1,\dots,u_k][x_k].
  \end{equation*}
\end{definition}

We will be using extension polynomials to extend a root tree beyond a leaf $v_k$ and reinforcement polynomials to improve the precisions of the approximate root $\tilde z_{v_k}$.

\begin{algorithm}[\texttt{grow!}]\
  \label{alg:extension}
  \begin{algorithmic}[1]
    \REQUIRE{$(\Gamma,v_k)$, where
      \begin{enumerate}[leftmargin=*]
      \item $\Gamma$ a root tree, and
      \item $v_k\in V(\Gamma)$ a leaf whose extension polynomial has a unique Newton polygon.
      \end{enumerate}}
    \ENSURE{\texttt{nothing}, extends $\Gamma$ so that $v_k$ is no leaf anymore.}
    \STATE Let $(v_0,\dots,v_k)\in B(\Gamma)$ be the branch ending at $v_k$.
    \FOR{$w\in \Trop(f_{k+1}(\tilde z_{v_1},\dots,\tilde z_{v_k},x_{k+1}))$}
    \STATE Add a new vertex $v'$ and a new edge $\{v_k,v'\}$ to $\Gamma'$.
    \STATE Set $\tilde z_{v'}\coloneqq u_{k+1}\cdot t^w$ and $p_{v'}\coloneqq 0$.
    \ENDFOR
  \end{algorithmic}
\end{algorithm}

Assuming there is a leaf $v_k$ whose extension polynomial $f_{k+1}(\tilde z_{v_1},\dots,\tilde z_{v_k},x_{k+1})$ has no unique Newton polygon, there are many ways how to proceed to improve the precision of $\tilde z_{v_1}, \dots, \tilde z_{v_k}$.  The following algorithm describes how it is done in our implementation:

\begin{algorithm}[\texttt{reinforce!}]\
  \label{alg:reinforcement}
  \begin{algorithmic}[1]
    \REQUIRE{$(\Gamma,v_k,p_{\text{step}},p_{\text{max}})$, where
      \begin{enumerate}[leftmargin=*]
      \item $\Gamma$ a root tree,
      \item $v_k\in V(\Gamma)$,
      \item $p_{\text{step}}\in \RR_{>0}$, a value by which to increment the precision of $\tilde z_{v_1}$,
      \item $p_{\text{max}} \in \RR_{\geq 0}$, a upper bound for the relative precisions as safeguard.
      \end{enumerate}}
    \ENSURE{\texttt{nothing}: Improves precision of the $\tilde z$ along the branch ending at $v_k$. This may add new vertices and edges to $\Gamma$. 
    }
    \STATE Let $(v_0,\dots,v_k)\in B(\Gamma)$ be the branch ending at $v_k$.
    \STATE Let $l=\min(\{1\}\cup \{j\mid j=2,\dots,k \text{ with } p_{v_j}<p_{v_1}\})$ be the index of the first branch vertex whose root precision needs to be increased.
    \STATE Construct the reinforcement polynomial $\tilde f\coloneqq f_l(\tilde z_{v_1},\dots,\tilde z_{v_{l-1}},\tilde z_{v_{l}}+x_l)$.
    \IF{$l=1$}
    \STATE In $\Gamma$, update the precision $p_{v_1}\coloneqq p_{v_1}+p_{\text{step}}$.
    \STATE Set $p_{\text{rel}}\coloneqq p_{v_1}$
    \ELSE
    \STATE In $\Gamma$, update the precision $p_{v_l}\coloneqq p_{v_1}$.
    \STATE Set $p_{\text{rel}}\coloneqq p_{\text{max}}$
    \ENDIF
    \STATE Compute $\tilde V\coloneqq \{ \tilde z_{v_l}+\tilde z'\mid z'\in \texttt{puiseux\_expansion}(\tilde f, 0, p_{\text{rel}})\}$.
    \STATE Let $\Gamma_{v_l}\subseteq\Gamma$ be the subtree below $v_l$ and delete $\Gamma_{v_l}$ from $\Gamma$.
    \FOR{$\tilde z_{v_l}'\in \tilde V$}
    \STATE Copy $\Gamma'\coloneqq \Gamma_{v_l}$.
    \STATE In $\Gamma'$, update the root $\tilde z_{v_l} \coloneqq \tilde z_{v_l} + \tilde z_{v_l}'$.
    \STATE Attach $\Gamma'$ to $v_{l-1}\in\Gamma$.
    \ENDFOR
  \end{algorithmic}
\end{algorithm}

Note that \cref{alg:reinforcement} may need to be called multiple times to improve the precision $v_k$, and it may also split branches, see \cref{ex:reinforcementPolynomial}.

Our main algorithm then is as follows:
\begin{algorithm}[\texttt{trop\_triangular}]\label{alg:tropTriangular}\
  \begin{algorithmic}[1]
    \REQUIRE{$(F,p_{\text{step}},p_{\text{max}})$, where
      \begin{enumerate}[leftmargin=*]
      \item $F=\{f_1,\dots,f_n\}\subseteq K[x]$ a triangular set,
      \item $p_{\text{step}}\in\RR_{>0}$, a value by which to increment the precision,
      \item $p_{\text{max}}\in\RR_{\geq 0}$, an upper bound for the relative precisions as safeguard.
      \end{enumerate}}
    \ENSURE{$\Trop(\langle F\rangle)\subseteq\RR^n$, the tropicalization of $F$.}
    \STATE Let $\Gamma$ be the starting root tree.
    \WHILE{there is a leaf $v_k\in V(\Gamma)$ of depth $k<n$}
    \STATE Let $(v_0,v_1,\dots,v_k)\in B(\Gamma)$ be the branch ending at $v_k$.
    \STATE Let $\tilde f\coloneqq f_{k+1}(\tilde z_{v_1},\dots,\tilde z_{v_k},x_{k+1})\in K[u_1,\dots,u_k][x_{k+1}]$ be the extension polynomial at $v_k$.
    \IF{the Newton polygon $\Delta(\tilde f)$ is unique}
    \STATE $\texttt{grow!}(\Gamma,v_k)$.
    \ELSE
    \STATE $\texttt{reinforce!}(\Gamma,v_k,p_{\text{step}},p_{\text{max}})$.
    \ENDIF
    \ENDWHILE
    \STATE Read off $\Trop(\langle F\rangle)$ from $\Gamma$:
    \begin{equation*}\vspace{-1em}
        \Trop(\langle F\rangle)\coloneqq \Big\{ (\val(\tilde z_{v_1}),\dots,\val(\tilde z_{v_n})) \bigmid (v_1,\dots,v_n)\in B(\Gamma) \Big\}.
    \end{equation*}
    \RETURN{$\Trop(\langle F\rangle)$}
  \end{algorithmic}
\end{algorithm}

\begin{example}
  \label{ex:reinforcementPolynomial}
  Consider the triangular system $F=\{f_1,f_2\}$ given by
  \begin{align*}
      f_1 = (x_1-1-t^2)(x_1-1-t-t^2)\quad\text{and}\quad f_2 = x_2-(x_1-1-t),
  \end{align*}
  as well as the precisions $p_{\text{step}}=p_{\text{max}}=2$.
  The polynomial $f_1$ is taken from \cref{ex:localfieldexpansion} and it is straightforward to see that $\Trop(\langle F\rangle)=\{(0,1), (0,2)\}$. We run \cref{alg:tropTriangular} on this example, each step representing when the root tree changes, see \cref{fig: main example}:

  Step 1: The main algorithm starts with a trivial root tree $\Gamma$, whose only leaf is the vertex $v_0$. As $\tilde f=f_1$, the empty Newton polygon $\Delta(\tilde f)$ is unique (with slope $0$), hence the algorithm calls \texttt{grow!} at $v_0$.
  
  Step 2: This adds a new vertex $v_1$, and extends the maps $\tilde z$ and $p$ by setting $\tilde z(v_1) = u_1$ and $\tilde p(v_1) = 0$.  Now the only leaf of $\Gamma$ is $v_1$ for which $\tilde f=f_2(\tilde z(v_1),x_2)=x_2-(u_1-1-t)$ whose Newton polygon is not unique.  Hence the algorithm proceeds by calling \texttt{reinforce!} on $v_1$.
  
  Step 3: This involves \texttt{puiseux\_expansion} on $f_1$ at valuation $0$ up to relative precision $p_{\text{rel}}=p(v_1)+p_{\text{step}}=2$, which gives us two approximate roots $\{1+t^2, 1+t+t^2u_1\}$ from \cref{ex:localfieldexpansion}.  Hence $v_1$ is split into two vertices $v_{1,1}$ and $v_{1,2}$ with $\tilde z(v_{1,1})= 1+t^2$, $p(v_{1,1})=2$, $\tilde z(v_{1,2})= 1+t+t^2u_1$, $p(v_{1,2})=2$. All resulting Newton polygons are unique, hence the algorithm calls \texttt{grow!} on $v_{1,1}$ and $v_{1,2}$.

  Steps 4 and 5: This adds new vertices $v_{2,1}$ and $v_{2,2}$ with $\tilde z(v_{1,1})= tu_2$, $p(v_{2,1})=0$, $\tilde z(v_{2,2})=t^2u_2$, $p(v_{2,2})=0$.  This completes the root tree $\Gamma$ and yields $\Trop(\langle F\rangle)=\{(0,1), (0,2)\}$.

  Observe that due to $p_{\text{step}}=2$, it was sufficient to call \texttt{reinforce!} (and thus \texttt{puiseux\_expansion}) once in Step 3.  If $p_{\text{step}}$ were $1$, we would have needed to call \texttt{reinforce!} twice to obtain the required precision.
\end{example}

\begin{figure}
    \centering
    \begin{tabular}{c | c | c | c | c | c }
        \textbf{Step} & \textbf{1} & \textbf{2} & \textbf{3} & \textbf{4}  & \textbf{5} \\ \hline \hline
        Tree & \Lazypic{1.5cm}{\begin{tikzpicture}
    \node[draw, circle] at (0, 2)   (root) {$r$};\draw[opacity = 0] (0,0) circle (.4cm) ;
    \node[opacity = 0] at (-0, -0.75)    {$v$};
        \end{tikzpicture}} & \Lazypic{2cm}{\begin{tikzpicture}
    \node[draw, circle] at (0, 2)   (root) {$r$};
    \node[draw, circle] at (0, 1)   (v1) {$0$};
    \draw (root) -- (v1); \draw[opacity = 0] (0,0) circle (.4cm) ;
    \node[opacity = 0] at (-0, -0.75)    {$v_{x,1}$};
        \end{tikzpicture}}& \Lazypic{2cm}{\begin{tikzpicture}
    \node[draw, circle] at (0, 2)   (root) {$r$};
    \node[draw, circle] at (-0.5, 1)   (v1) {0};
    \node[draw, circle] at (0.5, 1)   (v2) {0};
    \draw (root) -- (v1);\draw (v2) -- (root);\draw[opacity = 0] (0,0) circle (.4cm) ;
    \node[opacity = 0] at (-0.5, -0.75)    {$v_{x,1}$};
        \end{tikzpicture}}&\Lazypic{2cm}{\begin{tikzpicture}
    \node[draw, circle] at (0, 2)   (root) {$r$};
    \node[draw, circle] at (-.5, 1)   (v1) {0};
    \node[draw, circle] at (.5, 1)   (v2) {0};
    \node[draw, circle] at (-.5,0) (v3) {1};
    \draw (root) -- (v1);\draw (v2) -- (root); \draw (v1) -- (v3);
    \node[opacity = 0] at (-0.5, -0.75)    {$v_{x,1}$};
        \end{tikzpicture}} &  \Lazypic{2.5cm}{\begin{tikzpicture}
    \node[draw, circle] at (0, 2)   (root) {$r$};
    \node[draw, circle] at (-.5, 1)   (v1) {0};
    \node[draw, circle] at (.5, 1)   (v2) {0};
    \node[draw, circle] at (-.5,0) (v3) {1};
    \node[draw, circle] at (.5,0) (v4) {2};
    \draw (root) -- (v1);\draw (v2) -- (root);\draw (v3) -- (v1); \draw (v2) -- (v4);\node[opacity = 0.9] at (-1.3, 2)    {\small $v_0$};\node[opacity = 0.9] at (-1.3, 1)    {\small $v_{1}$};\node[opacity = 0.9] at (-1.3, 0)    {\small $v_{2}$};
    \node[opacity = 0.9] at (-0.5, -0.75)    {\small $v_{x,1}$};\node[opacity = 0.9] at (0.5, -0.75)    {\small $v_{x,2}$};
        \end{tikzpicture}} \\ \hline
        Procedure& \texttt{grow!} &  \texttt{reinforce!} & \texttt{grow!}  & \texttt{grow!} & \texttt{done!}\\
    \end{tabular}
    \caption{Each column depicts the root tree at a different step of \cref{alg:tropTriangular} performed in Example \ref{ex:reinforcementPolynomial}. Inside each vertex $v$ is the valuation  of $\tilde z (v)$. The last row of the table indicates the next sub-procedure the program performs. In the last step, the labelling of the vertices is indicated. }
    \label{fig: main example}
\end{figure}

\renewcommand*{\bibfont}{\small}
\printbibliography
\end{document}